\documentclass[11pt]{article}
\usepackage{amssymb, amsmath, amsthm, amsfonts, enumerate}
\usepackage{amsmath,setspace,scalefnt}
\usepackage[usenames,dvipsnames,svgnames,table]{xcolor}
\usepackage{graphicx,tikz,caption,subcaption}
\usetikzlibrary{patterns}
\usetikzlibrary{shapes}
\usetikzlibrary{decorations.pathreplacing}
\usetikzlibrary{decorations.pathmorphing}
\usetikzlibrary{positioning}
\usepackage{fullpage}
\usepackage{hyperref}

\usepackage{enumitem,verbatim}
\usepackage{bbm,dsfont}

\definecolor{gray}{rgb}{0.25, 0.25, 0.25}

\newtheorem{theorem}{Theorem}
\newtheorem{lemma}[theorem]{Lemma}

\theoremstyle{definition}

\theoremstyle{plain}

\theoremstyle{definition}

\theoremstyle{definition}

\theoremstyle{definition}

\theoremstyle{definition}
\newtheorem{defn}[theorem]{Definition}
\theoremstyle{definition}

\theoremstyle{definition}

\theoremstyle{definition}
\newtheorem{fact}[theorem]{Fact}

\newcommand*{\eqdef}{\stackrel{\mbox{\normalfont\tiny def}}{=}} 

\newenvironment{@abssec}[1]{%
     \if@twocolumn
       \section*{#1}%
     \else
       \vspace{.05in}\footnotesize
       \parindent .2in
         {\upshape\bfseries #1. }\ignorespaces 
     \fi}
     {\if@twocolumn\else\par\vspace{.1in}\fi}

\newcommand\ex{\ensuremath{\mathrm{ex}}}

\DeclareMathOperator{\codeg}{codeg}

\newcommand{\makenote}[2]
{

\smallskip

\noindent

\fbox{
\begin{minipage}{0.95\textwidth}

\def\temp{#1}
\ifx\temp\empty
\def\forlabel{$\bullet$}
\else
\def\forlabel{{\bfseries #1:}}
\fi

\begin{itemize}[label = \forlabel]
  #2
\end{itemize}
\end{minipage}
}

\smallskip
}

\definecolor{darkslateblue}{rgb}{0.28, 0.24, 0.55}
\definecolor{celestialblue}{rgb}{0.29, 0.59, 0.82}
\definecolor{cadmiumgreen}{rgb}{0.0, 0.42, 0.24}
\definecolor{darkpastelgreen}{rgb}{0.01, 0.75, 0.24}
\definecolor{deepcarmine}{rgb}{0.66, 0.13, 0.24}

\title{Positive co-degree Tur\'an number for $C_5$ and $C_5^{-}$}
\author{
Zhuo Wu\\
	\small Mathematics Institute and DIMAP\\[-0.8ex]
	\small University of Warwick\\[-0.8ex]
	\small Coventry, UK.\\ 
}

\begin{document}
\maketitle

\begin{abstract}
The \emph{minimum positive co-degree} $\delta^{+}_{r-1}(H)$ of a non-empty $r$-graph $H$ is the maximum $k$ such that if $S$ is an $(r-1)$-set contained in a hyperedge of $H$, then $S$ is contained in at least $k$ hyperedges of $H$. For any $r$-graph $F$, the \emph{positive degree Tur\'an number} $\mathrm{co}^{+}\ex(n,F)$ is defined as the maximum value of $\delta^{+}_{r-1}(H)$ over all $n$-vertex $F$-free non-empty $r$-graphs $H$. In this paper, we determine the positive degree Tur\'an number for $C_5$ and $C_5^{-}$.
\end{abstract}

\begin{section}{Introduction} 
An $r$-graph is a hypergraph whose edges all have size $r$. Given a $r$-graph $F$, the \emph{Tur\'an number} number $\mathrm{ex}(n,F)$ is the maximum number of edges over all $n$-vertex $F$-free $r$-graphs. The \emph{Tur\'an density} of $F$, denoted by $\pi(F)$, is the limit
\[\pi(F)\eqdef\lim_{n\rightarrow\infty}\frac{\mathrm{ex}(n,F)}{\binom{n}{r}}.\]

Determining these numbers is a central problem in extremal combinatorics. When $r=2$,  Erd\H{o}s-Stone theorem determines the Tur\'an density of  graphs with chromatic number at least 3(\cite{Stone}). However, very few exact results are known for hypergraphs. For example, the Tur\'an density for $K_4^{3}$ (the complete 3-uniform hypergraph on 4 vertices) is unknown.

In order to get a better understanding of these problems, mathematicians introduced new versions of the Tur\'an function. Let $H = (V, E)$ be an $r$-graph and $S$ be a subset of $V$ on $r-1$ vertices. We denote by $\codeg(S)$ the \emph{co-degree} of $S$, which is the number of edges from $E$ that contains $S$. Set 
\[
\delta_{r-1}(H) \eqdef \min_{S \in \binom{V}{r-1}} \codeg(S). 
\]
In 2006, Mubayi and Zhao (\cite{zhao}) considered the \emph{co-degree Tur\'an number} $\mathrm{co}\ex(n,F)$ as the maximum value of $\delta_{r-1}(H)$ over all $n$-vertex $F$-free $r$-graphs $H$, and the \emph{co-degree Tur\'an density}

\[\gamma(F)\eqdef\lim_{n\rightarrow\infty}\frac{\mathrm{coex}(n,F)}{n}.\]

There are quite a few results focusing on deciding the co-degree Tur\'an numbers for specific hypergraphs. For instance, 
\begin{itemize}
    \item Falgas-Ravry, Marchant, Pikhurko and Vaughan proved $\gamma(F_{3,2})=1/3$ where $F_{3,2}$ is the hypergrpah $\{abc,ade,bde,cde\}$ (\cite{Oleg}). 
    \item Mubayi proved that $\gamma(\mathbb{F})=1/2$ where $\mathbb{F}$ is the Fano plane (\cite{Fano}). 
\end{itemize}
More results can see \cite{Balogh}, \cite{Oleg2}, \cite{Nagle}. 

\medskip

In 2020, Balogh, Lemons and Palmer \cite{Balogh2} considered the positive co-degree in hypergraphs, and they
determined the maximum size of an intersecting $r$-graph with minimum positive co-degree
at least $k$ (see also extensions by Spiro \cite{Spiro}).

\begin{defn}
In an $r$-graph $H$, the \emph{minimum positive co-degree} of $H$, denoted $\delta^{+}_{r-1}(H)$, is the minimum co-degree over all $(r-1)$-set $S$ in $H$ such that the co-degree of $S$ is not $0$.
\end{defn}

Very recently, Halfpap, Lemons, and Palmer extended this notion to Tur\'an number in \cite{main}. They defined the positive co-degree Tur\'an number as follows:

\begin{defn}
Let $F$ be an $r$-graph. The \emph{positive co-degree Tur\'an number} $\mathrm{co}^{+}\ex(n,F)$ is the maximum value of $\delta_{r-1}^{+}(H)$ over all $n$-vertex $F$-free $r$-graphs $H$. Moreover, define the \emph{positive co-degree Tur\'an density}

\[\gamma^{+}(F)\eqdef\limsup_{n\rightarrow\infty} \frac{\mathrm{co}^{+}\ex(n,F)}{n}.\]
\end{defn}

In \cite{main}, the authors determined or bounded the positive co-degree Tur\'an density for some small 3-graphs. Let $K_4^{-}$ denote the  $3$-graph $\{abc, bcd, cda\}$, $K_4$ denote the $3$-graph $\{abc, bcd, cda, dab\}$, $C_5^{-}$ denote the $3$-graph $\{abc, bcd, cde, dea\}$, and $C_5$ denote the $3$-graph $\{abc, bcd, cde, dea, eab\}$. For instance, they proved that  \[\gamma^{+}(K_4^{-})=1/3, \qquad \gamma^{+}(F_{3,2})=1/2, \qquad 1/3\le \gamma^{+}(C_5^{-})\le 1/2, \qquad 1/2\le \gamma^{+}(C_5)\le 2/3.  \]  

In this paper, we will determine the positive co-degree number of $C_5$ and $C_5^{-}$ exactly.

\begin{theorem}\label{3}
For $n\ge 6$,
\[\mathrm{co}^{+}\ex(n,C_5^{-})=\lfloor n/3 \rfloor.\]
\end{theorem}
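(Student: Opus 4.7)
For the lower bound I would exhibit the natural extremal construction: the complete 3-partite 3-graph $H_0$ on parts $V_1,V_2,V_3$ of sizes $\lfloor n/3\rfloor,\lfloor n/3\rfloor,n-2\lfloor n/3\rfloor$, whose edges are all transversals. Any pair $\{u,v\}$ with $u\in V_i,v\in V_j$ ($i\neq j$) has co-degree $|V_k|$ for the remaining index $k$, so $\delta_2^+(H_0)=\lfloor n/3\rfloor$. To see $H_0$ is $C_5^-$-free, colour each vertex by its part, so every edge is rainbow; if $\{abc,bcd,cde,dea\}$ were a $C_5^-$, rainbowness of $abc$ and $bcd$ forces $d$ to inherit $a$'s colour (the one not taken by $b,c$), and similarly $e$ inherits $b$'s colour from $cde$, making $dea$ non-rainbow, a contradiction.

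For the upper bound I would assume, for contradiction, that $H$ is $n$-vertex, $C_5^-$-free, with $\delta_2^+(H)\geq k:=\lfloor n/3\rfloor+1$, and fix any edge $\{a,b,c\}\in E(H)$. The target is to prove that the three pair-neighbourhoods $N(a,b),N(a,c),N(b,c)$, each of size at least $k$, are pairwise disjoint as subsets of $V\setminus\{a,b,c\}$; this immediately yields $3(k-1)\leq n-3$ and hence $k\leq\lfloor n/3\rfloor$, the desired contradiction.

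The key combinatorial input is a ``link lemma'': for each vertex $v$ and each path $p-q-r-s$ on four distinct vertices in the link graph $L_v$, the triple $\{p,r,s\}$ is not an edge of $H$, since together with the three link-edges $vpq,vqr,vrs$ it forms a $C_5^-$ with $(v_1,v_2,v_3,v_4,v_5)=(p,q,v,r,s)$. Translating this lemma into relations among pair-neighbourhoods of $a,b,c$ and of vertices in their common neighbourhoods yields, for each $d\in N(b,c)\setminus\{a\}$, that $N(a,b)\cap N(c,d)=\emptyset$, $N(a,c)\cap N(b,d)=\emptyset$, and $N(a,d)\cap N(c,d)\subseteq\{b\}$; and, crucially, for each $d\in(N(a,b)\cap N(b,c))\setminus\{a,b,c\}$, that $(N(a,c)\cap N(b,c))\setminus\{a,b,c\}\subseteq\{d\}$. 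The last relation, applied symmetrically (under the $a\leftrightarrow c$ and $b\leftrightarrow c$ swaps that fix $abc$), shows that at most one of the three pairwise intersections of $N(a,b),N(a,c),N(b,c)$ inside $V\setminus\{a,b,c\}$ can contain two or more vertices. I then plan a case split: if all three pairwise intersections are empty, disjointness is immediate; if all are at most singletons, they must in fact be a common triple-intersection, forcing a $K_4^{(3)}$ whose positive co-degrees push the inclusion-exclusion bound past $n-3$; and if one of them contains many vertices, I invoke $\delta_2^+(H)\geq k$ at such a vertex $d$ and use the link lemma (for one of the other two pair-neighbourhoods) to extract a forbidden $C_5^-$.

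The main obstacle is that, unlike $K_4^-$-freeness, $C_5^-$-freeness does not outright forbid two of $N(a,b),N(a,c),N(b,c)$ from sharing a vertex---such an overlap only produces a $K_4^-$, which is permitted. Thus one cannot apply the clean ``three pairwise disjoint neighbourhoods'' argument of the $K_4^-$ case from~\cite{main}. The hard part is handling a shared vertex $d$: one must carefully combine the high positive co-degree at $d$ with the link lemma to force a fifth vertex $e$ into one of the forbidden $C_5^-$ patterns, while keeping track of which exceptional vertices among $\{a,b,c,d\}$ can still lie in the various intersections without immediately producing a $C_5^-$. This bookkeeping---and in particular ruling out the tight subcases of the inclusion-exclusion bound when $n$ is not divisible by $3$---is where I expect the bulk of the work to lie.
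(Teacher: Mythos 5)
Your lower bound construction and its verification are the same as the paper's. Your upper bound, however, takes a genuinely different route, and I want to flag both what it buys you and where it has a real gap.

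The paper's upper bound is short: it invokes Lemma~\ref{1} to produce a $K_4^-$ (or $K_4$) on four vertices $v_1,\dots,v_4$, then counts over the three (or six) pair-neighbourhoods \emph{around a single vertex of that $K_4^-$} with $\{v_1,\dots,v_4\}$ removed. This forces a fifth vertex $v_5$ lying in two of those pair-neighbourhoods, from which a $C_5^-$ is exhibited explicitly. Because the count lives inside $V\setminus\{v_1,\dots,v_4\}$ (size $n-4$) and the sets are each at least $k-2 > n/3-2$, the pigeonhole step is tight for \emph{all} residues of $n\bmod 3$. Your plan instead fixes an edge $\{a,b,c\}$ and aims to show $N(a,b),N(a,c),N(b,c)$ are pairwise disjoint outside $\{a,b,c\}$ --- in effect proving $H$ is $K_4^-$-free directly from $C_5^-$-freeness --- and then applies inclusion--exclusion. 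Your link lemma is correct (it is exactly the observation that a tight path of length four in $H$ closing to a triple gives a $C_5^-$), and the derived relations --- e.g.\ $N(a,b)\cap N(c,d)=\varnothing$ for $d\in N(b,c)\setminus\{a\}$, and that the three ``pairwise intersections'' $I_a,I_b,I_c$ (outside $\{a,b,c\}$) are all contained in a common singleton $\{d\}$ --- are valid and are a genuine structural insight not spelled out in the paper.

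The gap is in the counting. Granting that $I_a,I_b,I_c\subseteq\{d\}$ for a single exceptional vertex $d$, the best you get from inclusion--exclusion on $N(a,b)\setminus\{c\}$, $N(a,c)\setminus\{b\}$, $N(b,c)\setminus\{a\}$ inside $V\setminus\{a,b,c\}$ is $3(k-1)-2\le n-3$, i.e.\ $3k\le n+2$. With $k=\lfloor n/3\rfloor+1$ this is a contradiction only when $n\equiv 0\pmod 3$; for $n\equiv 1,2\pmod 3$ the inequality is satisfied and there is no contradiction. So your ``immediate'' conclusion $3(k-1)\le n-3$ does not hold in the presence of the exceptional vertex, and the residual cases are exactly the ones you defer to ``bookkeeping''. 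Also, your middle subcase is misdescribed: a nonempty $I_a$ with $I_b=I_c=\varnothing$ produces only a $K_4^-$, not a $K_4^{(3)}$, so there is no automatic ``triple intersection'' to exploit. Closing these cases will almost certainly require a counting argument of the paper's type --- looking at pair-neighbourhoods around one vertex of the forced $K_4^-$ rather than the three pair-neighbourhoods of the original edge --- at which point the argument reconverges to the paper's. As written, the proposal is an interesting alternative framing but is not a complete proof.
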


\begin{theorem}\label{4}
For $n\ge 6$,
\[\mathrm{co}^{+}\ex(n,C_5)=\begin{cases} 2k, \quad \text{if $n=4k+0,1,2$;}  \\2k+1, \quad \text{if $n=4k+3$.}  \end{cases}\]
\end{theorem}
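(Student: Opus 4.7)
The proof proceeds in two parts: a construction realizing the claimed minimum positive codegree, and a matching upper bound.

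\textbf{Lower bound.} The plan is to take the balanced blow-up of the $4$-vertex complete $3$-graph $K_4$. Partition $V$ into four parts $V_1, V_2, V_3, V_4$ with sizes $(k,k,k,k)$, $(k,k,k,k+1)$, $(k,k,k+1,k+1)$, and $(k,k+1,k+1,k+1)$ in the cases $n = 4k, 4k+1, 4k+2, 4k+3$ respectively, and let $H$ consist of all triples whose vertices lie in three pairwise distinct parts. For $C_5$-freeness, any embedding produces a cyclic sequence of five part-labels $c_1 c_2 c_3 c_4 c_5 \in \{1,2,3,4\}^5$ in which every three cyclically consecutive entries are pairwise distinct; a short case analysis shows no such sequence exists with only four colors. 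The minimum positive codegree is then read off: a pair in parts $V_i, V_j$ has codegree $|V_k| + |V_l|$ where $\{k,l\} = \{1,2,3,4\}\setminus\{i,j\}$, and minimizing over all such pairs reproduces $2k$ in the first three cases and $2k+1$ in the last.

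\textbf{Upper bound.} Suppose for contradiction $H$ is $C_5$-free with $\delta^+_2(H) \ge T(n)+1$, where $T(n)$ denotes the value from the theorem. Fix an edge $\{a,b,c\} \in E(H)$, and for any pair $P$ with positive codegree write $N(P)$ for the set of vertices $x$ such that $P \cup \{x\} \in E(H)$. I would first show that there exists $d \in N(\{b,c\}) \setminus \{a\}$ with $\{a,d\}$ of positive codegree: otherwise every $d \in N(\{b,c\}) \setminus \{a\}$ has $\{a,d\}$ in no edge, which forces $N(\{a,b\}) \cap N(\{b,c\}) = \emptyset$ (any common $x$ would satisfy $x \in N(\{b,c\})\setminus\{a\}$ while the edge $\{a,b,x\}$ gives codegree $\ge 1$ to $\{a,x\}$), and since $N(\{a,b\}), N(\{b,c\}) \subseteq V\setminus\{b\}$ this produces $2(T(n)+1) \le n - 1$, contradicting the hypothesis in every residue class of $n$ modulo $4$. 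Given such a $d$, a vertex closing the tight path $\{a,b,c\},\{b,c,d\}$ into a $C_5$ is precisely an element of $N(\{c,d\}) \cap N(\{d,a\}) \cap N(\{a,b\})$, and the goal is to show this triple intersection is non-empty.

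\textbf{Main obstacle.} The naive inclusion-exclusion $|N(\{c,d\}) \cap N(\{d,a\}) \cap N(\{a,b\})| \ge 3(T(n)+1) - 2n$ is positive only for $T(n)+1 > 2n/3$, which matches the previously-known bound $\gamma^+(C_5) \le 2/3$ of Halfpap-Lemons-Palmer. Pushing down to $\approx n/2$ requires exploiting $C_5$-freeness more strongly. The plan is to argue that if no single $e$ closes the cycle for the chosen tight path, then across the roughly $\delta^+_2$ valid choices of $d$ the three neighborhoods behave almost disjointly, and a careful double-counting of edges in the links of $a, b, c, d$ either produces a $C_5$ directly or yields a structural constraint on the codegree-positive shadow graph (a forbidden book or theta) that contradicts $C_5$-freeness after re-selecting the base edge. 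The hardest residue is $n \equiv 3 \pmod{4}$, where the threshold jumps by one due to the three large parts of size $\lceil n/4 \rceil$ in the extremal example; here the counting must leverage the stronger hypothesis $\delta^+_2(H) \ge 2k+2$ to handle the extra edge density available to $H$.
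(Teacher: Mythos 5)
Your lower-bound construction is correct and coincides with the paper's: the balanced blow-up of $K_4$ is $C_5$-free by the cyclic-coloring argument, and the minimum positive co-degree is $|V_i|+|V_j|$ over the two parts not containing the pair, which evaluates to exactly $T(n)$.

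The upper bound, however, has a genuine gap. You correctly identify that starting from only two overlapping edges $abc, bcd$ and asking whether $N(\{c,d\})\cap N(\{d,a\})\cap N(\{a,b\})\neq\varnothing$ runs into the barrier $3\delta^+_2 - 2n > 0$, i.e.\ $\delta^+_2 > 2n/3$, which is exactly the Halfpap--Lemons--Palmer bound. But your plan to break the barrier (``a careful double-counting of edges in the links \ldots either produces a $C_5$ directly or yields a structural constraint on the codegree-positive shadow graph (a forbidden book or theta)'') is not a proof step; it is a statement of intent without a mechanism. You have not said what configuration is counted, what the resulting inequality would be, or why a ``forbidden book or theta'' in the shadow graph would conflict with $C_5$-freeness. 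As written, the argument stops at the $2/3$ threshold.

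The paper overcomes the barrier through two ideas you do not have. First, it invokes the known result $\mathrm{co}^{+}\ex(n, K_4^-)=\lfloor n/3\rfloor$ to guarantee a copy of $K_4^-$ (three edges on four vertices $v_1,\dots,v_4$, not merely two), and then sums $|N(v_i,v_j)|$ over all $\binom{4}{2}=6$ pairs. This gives a total of at least $3n$, producing a vertex $v_5$ lying in at least four of the six neighborhoods; a short case analysis on how $v_5$ distributes (keeping track of whether the four vertices form a $K_4$ or only a $K_4^-$, since $K_4$-freeness forbids $v_5$ from filling certain neighborhoods) then yields a $C_5$. This already gives $\mathrm{co}^{+}\ex(n,C_5)\le n/2$ and settles every residue except $n\equiv 2\pmod 4$. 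Second, and this is the bulk of the paper's proof, the boundary case $n\equiv 2\pmod 4$ with $\delta^+_2(H)=n/2$ requires showing the configuration forces $4\mid n$. The paper does this with a detailed structural analysis: from a $K_4$ on $v_i,v_j,v_k,v_\ell$ it derives equalities $|N(v_i,v_j)|=n/2$ and $N(v_i,v_j)=V\setminus N(v_k,v_\ell)$, partitions $V(H)$ into eight sets $A_i,B_i$, shows exactly one $B_i$ is non-empty, builds an equivalence relation $\sim$ on that $B_i$ via empty co-neighborhood, and uses an involution $f$ on the equivalence classes to force $|B_i|$ even, hence $4\mid n$. None of this machinery, nor any substitute for it, appears in your proposal, and it is precisely what your sketch calls ``the hardest residue'' without supplying an argument.
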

\end{section}

\begin{section}{Preliminaries}

Let $H$ be an $3$-graph. For any edge $e=\{x,y,z\}\in E(H)$, we will write $xyz$ for convenience. For any two different vertices $u,v\in H$, denote $N(u,v)$ as the set of vertices $w$ such that $uvw$ forms a 3-edge of $H$, and define $N(u,u)=\varnothing$.
For simplicity, we will say that $a, b, c, d$ form a $K_4$ to represent that $abc, bcd, cda, dab$ form a $K_4$.

For $k\ge 3$, we say $H$ is $k$-partite if there exists a partition $V_1,V_2,\cdots,V_k$ of its vertex
set such that each edge intersects each partition class in at most one vertex. A $k$-partite
hypergraph is \emph{complete} if every possible edge is present and is \emph{balanced} if the class sizes differ
by at most $1$.

We also need the following lemma from \cite[Theorem $4$]{main}.

\begin{lemma}\label{1}
\[\mathrm{co}^{+}\ex(n,K_4^{-})=\lfloor n/3 \rfloor.\]
\end{lemma}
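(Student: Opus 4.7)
The plan is to prove both bounds of the equality. For the lower bound, I would take the complete balanced $3$-partite $3$-graph on parts $V_1, V_2, V_3$ of sizes $\lfloor n/3 \rfloor$ or $\lceil n/3 \rceil$. A pair of vertices has positive co-degree if and only if they lie in two different parts, in which case the co-degree equals the size of the third part, so $\delta_2^+ = \lfloor n/3 \rfloor$. For $K_4^-$-freeness, I would observe that in $K_4^{-}=\{abc,bcd,cda\}$ every one of the six pairs among $\{a,b,c,d\}$ is covered by some hyperedge; consequently, any embedding into a $3$-partite host would force the four vertices into four distinct classes, which is impossible with only three classes.

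For the upper bound, I would suppose $\delta_2^+(H)\ge\lfloor n/3\rfloor+1$, fix any edge $abc\in E(H)$, and examine the three common neighborhoods $N(a,b), N(a,c), N(b,c)$. The crucial observation is that any vertex $d\notin\{a,b,c\}$ lying in two of these, say $d\in N(a,b)\cap N(a,c)$, immediately yields a $K_4^-$ on $\{a,b,c,d\}$ via the edges $abc, abd, acd$. To produce such a $d$, I would first note that $c\in N(a,b)$, $b\in N(a,c)$, $a\in N(b,c)$ are the only vertices in $\{a,b,c\}$ lying in these sets, so the shifted sets $N(a,b)\setminus\{c\}$, $N(a,c)\setminus\{b\}$, $N(b,c)\setminus\{a\}$ each have size at least $\lfloor n/3\rfloor$ and lie inside $V\setminus\{a,b,c\}$, a set of size $n-3$. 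Since $3\lfloor n/3\rfloor\ge n-2>n-3$ for every residue of $n$ modulo $3$, the three shifted sets cannot be pairwise disjoint, so some pair intersects in a vertex $d$ outside $\{a,b,c\}$, giving the desired $K_4^-$.

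The single step that requires care is the accounting in the pigeonhole estimate: one must properly exclude the ``trivial'' witnesses $a,b,c$ from the three neighborhoods before comparing sizes, otherwise the intersecting vertex produced might coincide with one of $a,b,c$ and fail to yield a fresh fourth vertex. Everything else is routine: the construction is a standard complete balanced $3$-partite $3$-graph, the $K_4^-$-freeness reduces to a one-line pigeonhole on parts, and the upper bound collapses to a short averaging argument over the three link neighborhoods of one fixed edge. I do not expect any serious obstacle beyond this bookkeeping.
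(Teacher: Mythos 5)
The paper does not prove this lemma itself; it simply imports it from Theorem~4 of \cite{main}. So there is no in-paper proof to compare against, and the right question is whether your self-contained argument is correct. It is. The lower bound is handled exactly as one would expect: in the complete balanced $3$-partite $3$-graph every pair with positive co-degree lies across two parts and sees the whole third part, giving $\delta_2^+ = \lfloor n/3\rfloor$, and your observation that all six pairs in $K_4^-$ are covered by an edge correctly shows any copy would need four distinct parts, so none exists. For the upper bound, the accounting is sound: removing the trivial witnesses $c\in N(a,b)$, $b\in N(a,c)$, $a\in N(b,c)$ leaves three subsets of $V\setminus\{a,b,c\}$ each of size at least $\lfloor n/3\rfloor$, and since $3\lfloor n/3\rfloor\ge n-2>n-3$, they cannot be pairwise disjoint; a common vertex $d$ in two of them, together with $abc$, gives the three edges of a $K_4^-$. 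You were right to flag the exclusion of $a,b,c$ as the delicate point, and you handled it correctly. This is the standard short proof of the statement.
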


\end{section}

\begin{section}{Proof of Theorem~\ref{3}}
The lower bound is witnessed by the complete balanced $3$-partite $3$-graph. We prove the upper bound then. Let $H$ be an $n$-vertex $3$-graph with $\delta_2^{+}(H)>n/3$. By Lemma~\ref{1}, there are four vertices $v_1, v_2, v_3, v_4$ such that $v_1v_2v_3, v_1v_2v_4, v_1v_3v_4$ form a $K_4^{-}$ in $H$. 

\smallskip
\textbf{Case 1:}  $v_2v_3v_4\notin H$. It follows from $\delta_2^{+}(H)\ge n/3$ that
\[|N(v_2,v_1)\setminus\{v_3,v_4\}|+|N(v_2,v_3)\setminus\{v_1\}|+|N(v_2,v_4)\setminus\{v_1\}|>n-4. \]
So, there exists $v_5\notin \{v_1,v_2,v_3,v_4\}$ which appears in at least two of $N(v_2,v_1), N(v_2,v_3), N(v_2,v_4)$. 
\begin{itemize}
    \item If $v_5\in N(v_2,v_1)\cap N(v_2,v_3)$, then $v_3v_4v_1$, $v_4v_1v_2$, $v_1v_2v_5$, $v_2v_5v_3$ form a $C_5^{-}$;
    \item If $v_5\in N(v_2,v_1)\cap N(v_2,v_4)$, then $v_4v_3v_1$, $v_3v_1v_2$, $v_1v_2v_5$, $v_2v_5v_4$ form a $C_5^{-}$;
    \item If $v_5\in N(v_2,v_3)\cap N(v_2,v_4)$, then $v_4v_1v_3$, $v_1v_3v_2$, $v_3v_2v_5$, $v_2v_5v_4$ form a $C_5^{-}$. 
\end{itemize}
We conclude that $\mathrm{co}^{+}\ex(n,C_5^{-}) = \lfloor n/3 \rfloor$. 

\smallskip
\textbf{Case 2:}  $v_2v_3v_4\in H$. If $\{i,j\}\cup \{k,l\}=\{1,2,3,4\}$, then we write
\[ M_{i,j} \eqdef N(v_i,v_j)\setminus\{v_k,v_l\}.\]
Since $n\ge 6$, it follows from $\delta_2^{+}(H)\ge n/3$ that
\[\sum_{\{i,j\}}|M_{i,j}|\ge 6\left\lceil\frac{n+1}{3} \right\rceil -12> n-4. \]
So, there exists $v_5\notin \{v_1,v_2,v_3,v_4\}$ which appears in at least two of $M_{i,j}$, say $v_5\in M_{i,j} \cap M_{i',j'}$.

\begin{itemize}
    \item If $\{i,j\}\cap \{i',j'\}\neq \varnothing$, then we assume $i=i'=1, \, j=2, \, j'=3$ without loss of generality, and so $v_3v_4v_2$, $v_4v_2v_1$, $v_2v_1v_5$, $v_1v_5v_3$ form a $C_5^{-}$.
    \item If $\{i,j\}\cap \{i',j'\}= \varnothing$, then we assume $i=1, \, j=2, \, i'=3, \, j'=4$ without loss of generality, and so $v_5v_1v_2$, $v_1v_2v_3$, $v_2v_3v_4$, $v_3v_4v_5$ form a $C_5^{-}$.
\end{itemize}
We conclude that $\mathrm{co}^{+}\ex(n,C_5^{-}) = \lfloor n/3 \rfloor$. \qed

\end{section}

\begin{section}{Proof of Theorem~\ref{4}}

The lower bound is witnessed by the complete balanced $4$-partite $3$-graph. We prove the upper bound then. Let $H$ be an $n$-vertex $3$-graph with $\delta_2^{+}(H)\ge n/2$. There are two cases: 
 
\smallskip
 
\textbf{Case 1:} $H$ is $K_4$-free. 

By Lemma~\ref{1}, we can find four vertices $v_1, v_2, v_3, v_4$ such that $v_1v_2v_3, v_1v_2v_4, v_1v_3v_4$ form a $K_4^{-}$.  It follows from $\delta_2^{+}(H)\ge n/2$ that
\[|N(v_1,v_2)|+|N(v_1,v_3)|+|N(v_1,v_4)|+|N(v_2,v_3)|+|N(v_2,v_4)|+|N(v_3,v_4)|\ge 3n. \]
Because $H$ is $K_4$-free, $v_2\notin N(v_3,v_4)$, hence $v_2$ only belongs to $N(v_1,v_3)$ and $N(v_1,v_4)$. Thus, there exists $v_5$ that belongs to at least four of the six sets $N(v_i,v_j) \, (1\le i<j\le 4)$. Here, $v_5\notin \{v_1,v_2,v_3,v_4\}$. Let $m$ be the number of sets among $N(v_1, v_2), \, N(v_1, v_3), \, N(v_1, v_4)$ that contain $v_5$, then $m\ge 1$. 
\begin{itemize}
    \item $m=3$. Since $H$ is $K_4$-free, we have $v_5 \notin N(v_2,v_3) \cup N(v_2,v_4) \cup N(v_3,v_4)$, which contradicts the fact that $v_5$ belongs to at least four of $N(v_i,v_j)$. 
    \item $m=2$. By symmetry, we may assume without loss of generality  that $v_5\in N(v_1,v_2) \cap N(v_1,v_3)$. From the facts that $H$ is $K_4$-free and $v_5\notin N(v_2,v_3)$ we deduce that $v_5\in N(v_2,v_4)\cap N(v_3,v_4)$. We thus obtain a $C_5$ on edges $v_4v_5v_3, v_5v_3v_1, v_3v_1v_2, v_1v_2v_4, v_2v_4v_5$. 
    \item $m=1$. Assume $v_5\in N(v_1,v_2)$ without loss of generality. Then $v_5 \in N(v_2,v_4) \cap N(v_3,v_4)$. It thus follows that $v_1v_2v_5, v_2v_5v_4, v_5v_4v_3, v_4v_3v_1, v_3v_1v_2$ form a $C_5$ in $H$. 
\end{itemize}

\smallskip

\textbf{Case 2:} $v_1, v_2, v_3, v_4$ form a $K_4$ in $H$. 

We first disprove that $\delta_2^{+}(H)>n/2$. Otherwise, since 
\[|N(v_1,v_2)|+|N(v_1,v_3)|+|N(v_1,v_4)|+|N(v_2,v_3)|+|N(v_2,v_4)|+|N(v_3,v_4)|>3n, \]
we can find $v_5$ that belongs to four of the sets $N(v_i,v_j) \, (1 \le i < j \le 4)$. Here, $v_5\notin \{v_1,v_2,v_3,v_4\}$. Group the six sets $N(v_i,v_j)$ into $3$ pairs: 
\begin{equation} \label{eq:pair}
\{N(v_1,v_2),N(v_3,v_4)\}, \quad \{N(v_1,v_3),N(v_2,v_4)\}, \quad \{N(v_1,v_4),N(v_2,v_3)\}.
\end{equation}
Then $v_5$ appears in both sets in at least one of the pairs, say $v_5\in N(v_1,v_2) \cap N(v_3,v_4)$. Assume further that $v_5\in N(v_2,v_3)$, by symmetry. Then $v_1v_2v_5, v_2v_5v_3, v_5v_3v_4, v_3v_4v_1, v_4v_1v_2$ form a $C_5$. 

It follows that $\mathrm{co}^{+}\ex(n,C_5)\le n/2$, and so Theorem~\ref{4} is true if $n \not\equiv 2 \pmod 4$.

However, the main difficulty lies in the case when $n \equiv 2 \pmod 4$. When $n=4k+2$, we have to show that $\mathrm{co}^{+}\ex(n,C_5)=2k$. Indeed, we are going to prove $4 \mid n$ given that $H$ is an $n$-vertex $C_5$-free $3$-graph with $\delta_2^{+}(H)=n/2$. 

Let us state some properties that holds for every four vertices $v_i,v_j,v_k,v_{\ell}$ that form a $K_4$.
It follows that
\[|N(v_i,v_j)|+|N(v_i,v_k)|+|N(v_i,v_{\ell})|+|N(v_j,v_k)|+|N(v_j,v_{\ell})|+|N(v_k,v_{\ell})|\ge 3n. \]
Indeed, the equality has to be achieved, for otherwise we can find a $C_5$ via the analysis above. We thus deduce the following facts: 

\begin{fact}\label{111}
$N(v_i,v_j)=n/2$. Moreover, each $v\in V(H)$ belongs to exactly $3$ of 
\[
N(v_i,v_j), \quad N(v_i, v_k), \quad N(v_i, v_{\ell}), \quad N(v_j, v_k), \quad N(v_j, v_{\ell}), \quad N(v_k, v_{\ell}). 
\]
\end{fact}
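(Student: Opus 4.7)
The plan is to chain together the two ways of computing $\sum_{\{a,b\}} |N(v_a,v_b)|$ (as a sum of six set sizes, and as a double count over vertex-membership) to pin down both the individual sizes and the individual vertex contributions. The excerpt has already shown that this sum equals exactly $3n$: the lower bound $|N(v_a,v_b)| \ge \delta_2^+(H) = n/2$ holds because every pair inside the $K_4$ has positive co-degree, and strict inequality would by pigeonhole produce a non-$K_4$ vertex lying in $\ge 4$ of the six sets, hence (via the pair-grouping \eqref{eq:pair}) a $C_5$. Combined, these observations force $|N(v_a,v_b)| = n/2$ for each of the six pairs.

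For the membership claim, I would double-count via $f(v) := |\{\{a,b\} \subset \{i,j,k,\ell\} : v \in N(v_a,v_b)\}|$, so that $\sum_{v \in V(H)} f(v) = 3n$. Each of the four $K_4$-vertices $v_p$ satisfies $f(v_p) = 3$: it appears in $N(v_a,v_b)$ exactly when $v_p v_a v_b$ is an edge, which, since $\{v_i,v_j,v_k,v_\ell\}$ spans a $K_4$, happens for all three pairs $\{a,b\}$ avoiding $p$, while $v_p \notin N(v_p,\cdot)$. So these four vertices contribute $12$ in total, leaving exactly $3(n-4)$ to be distributed among the remaining $n-4$ vertices.

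The last step is to show $f(v) \le 3$ for every $v \notin \{v_i,v_j,v_k,v_\ell\}$. If some such $v$ had $f(v) \ge 4$, then pigeonhole on the three pairs from \eqref{eq:pair} places $v$ in both sets of one pair while $v$ still lies in at least one of the four remaining sets --- which is exactly the configuration from which Case~2 builds an explicit $5$-cycle, contradicting $C_5$-freeness of $H$. Hence $f(v) \le 3$ for every vertex, and together with $\sum_{v \notin K_4} f(v) = 3(n-4)$ each such term must equal $3$. The one conceptual point to spot is that the Case~2 cycle construction never really needed the strict inequality $\delta_2^+(H) > n/2$ --- it only needed a non-$K_4$ vertex lying in $\ge 4$ of the six co-neighborhoods --- so it remains available in the equality regime and drives this argument.
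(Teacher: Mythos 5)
Your proof is correct and takes essentially the same route as the paper: the paper obtains the lower bound $\sum |N(v_a,v_b)| \ge 3n$ from $\delta_2^+(H) = n/2$, observes that strict inequality would give a non-$K_4$ vertex in four of the six sets and hence a $C_5$ by the pair-grouping argument, and leaves the deduction of the two conclusions implicit --- you have simply spelled out that deduction (each summand is squeezed to $n/2$, and the double count with $f(v)\le 3$ forces $f(v)=3$ everywhere). Your closing remark that the Case~2 cycle construction only needs a vertex in $\ge 4$ of the six co-neighborhoods, not strict inequality of $\delta_2^+$, is exactly the point the paper is silently relying on.
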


If there exists some  vertex $v\in V(H)$ that belongs to two sets in the same pair as in \eqref{eq:pair}, assume that $v\in N(v_i,v_j) \cap N(v_k,v_{\ell})$.  Fact~\ref{111} implies that we can assume further that $v\in N(v_j,v_k)$ by symmetry, and so $v_iv_jv, v_jvv_k, vv_kv_{\ell}, v_kv_{\ell}v_i, v_{\ell}v_iv_j$  form a $C_5$, a contradiction. Hence, from Fact~\ref{111} we see that every $v\in V(H)$ appears in exactly one set in each pair. This implies the following: 

\begin{fact}\label{setminus}
$N(v_i,v_j)=V(H)\setminus N(v_k,v_{\ell})$.
\end{fact}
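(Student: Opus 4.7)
The plan is to deduce Fact~\ref{setminus} from Fact~\ref{111} together with the $C_5$-freeness of $H$. Since $|N(v_a,v_b)|=n/2$ for each of the six co-neighborhoods (by Fact~\ref{111}) and $|V(H)|=n$, the two sets in any pair from \eqref{eq:pair} have total cardinality $n$. Hence $N(v_i,v_j)=V(H)\setminus N(v_k,v_\ell)$ will follow as soon as I show that these two sets are disjoint, i.e.\ no vertex of $V(H)$ lies in both.

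To establish this no-overlap claim I would argue by contradiction. Suppose $v\in N(v_i,v_j)\cap N(v_k,v_\ell)$. By Fact~\ref{111}, $v$ lies in exactly three of the six co-neighborhoods, so it also belongs to one of the four ``cross'' sets $N(v_i,v_k), N(v_i,v_\ell), N(v_j,v_k), N(v_j,v_\ell)$. The swaps $v_i\leftrightarrow v_j$ and $v_k\leftrightarrow v_\ell$ preserve all the hypotheses (the $K_4$ on $\{v_1,\dots,v_4\}$, the pair structure in \eqref{eq:pair}, and the initial membership $v\in N(v_i,v_j)\cap N(v_k,v_\ell)$) and act transitively on these four cross sets, so after relabeling I may assume $v\in N(v_j,v_k)$. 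The key observation is then that the five triples $v_iv_jv$, $v_jvv_k$, $vv_kv_\ell$, $v_kv_\ell v_i$, $v_\ell v_iv_j$ are all edges of $H$---the first three by the chosen co-neighborhood memberships of $v$, the last two from the $K_4$ on $\{v_1,v_2,v_3,v_4\}$---and they form a $C_5$ on the cyclic sequence $v_i,v_j,v,v_k,v_\ell$, contradicting $C_5$-freeness.

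I expect the entire argument to be short; the only place calling for care is the symmetry reduction, where one should verify that the relabelings $v_i\leftrightarrow v_j$ and $v_k\leftrightarrow v_\ell$ together generate a group acting transitively on the four cross sets, so that the WLOG step really covers all four cases rather than quietly dropping some. Beyond that the $C_5$ construction is mechanical, and the proof of Fact~\ref{setminus} closes by combining the just-proved disjointness with the cardinality bookkeeping from Fact~\ref{111}.
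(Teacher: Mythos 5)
Your proposal is correct and follows essentially the same path as the paper: assume a vertex $v$ lies in both $N(v_i,v_j)$ and $N(v_k,v_\ell)$, use Fact~\ref{111} to place $v$ in one of the four cross sets, reduce by the $v_i\leftrightarrow v_j$ and $v_k\leftrightarrow v_\ell$ symmetry to $v\in N(v_j,v_k)$, and exhibit the $C_5$ on the cyclic sequence $v_i,v_j,v,v_k,v_\ell$. The only cosmetic difference is the final bookkeeping: you close via the cardinality count $|N(v_i,v_j)|+|N(v_k,v_\ell)|=n$, whereas the paper observes that each vertex lies in exactly one member of each pair in \eqref{eq:pair}; both are immediate once disjointness is established.
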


In order to state some further properties of $v_i, v_j, v_k, v_{\ell}$ that form a $K_4$, we define 
\[
A_i^{jk\ell} \eqdef N(v_j,v_k)\cap N(v_k,v_{\ell})\cap N(v_{\ell},v_j), \qquad B_i^{jk\ell} \eqdef N(v_i,v_j)\cap N(v_i,v_k)\cap N(v_i,v_{\ell}). 
\]
It may seem a bit strange to include a subscript $i$ in $A_i^{jk\ell}$ since the definition of $A_i^{jk\ell}$ has nothing to do with $i$. However, we shall see that $A_i^{jk\ell}$ consists of exactly the vertices $v$ such that $N(v, v_i) = \varnothing$.  

Facts~\ref{111} and \ref{setminus} imply that the $8$ sets $A_i^{jk\ell}, \, A_j^{k\ell i}, \, A_k^{\ell ij}, \, A_{\ell}^{ijk}$ and $B_i^{jk\ell}, \, B_j^{k\ell i}, \, B_k^{\ell ij}, \, B_{\ell}^{ijk}$ partition $V(H)$. Observe that $v_i \in A_i^{jk\ell}$, and so $A_i^{jk\ell} \neq \varnothing$. 

\smallskip

Write $A_i \eqdef A_i^{jkl}$ and $B_i \eqdef B_i^{jkl}$ when the omission of superscripts carries no confusion. 

\begin{fact}\label{AAA}
If $a\in A_i$ and $b\in A_j$, then $N(a,b)=N(v_i,v_j)$. 
\end{fact}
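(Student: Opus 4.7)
The plan is to apply Facts~\ref{111} and \ref{setminus} to auxiliary $K_4$'s built from $a$, $b$, and two of the $v_m$'s, and then chain the set-complement identities from Fact~\ref{setminus} together.

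First I would observe that because $a\in A_i=N(v_j,v_k)\cap N(v_k,v_\ell)\cap N(v_\ell,v_j)$, each of the triples $av_jv_k$, $av_kv_\ell$, $av_\ell v_j$ is an edge; together with the edge $v_jv_kv_\ell$ from the original $K_4$, this means that $\{v_j,v_k,v_\ell,a\}$ also forms a $K_4$. Since Facts~\ref{111} and \ref{setminus} hold for every $K_4$, Fact~\ref{setminus} applied to this new $K_4$ gives $N(v_\ell,a)=V(H)\setminus N(v_j,v_k)$, while the same fact applied to the original $K_4$ gives $N(v_i,v_\ell)=V(H)\setminus N(v_j,v_k)$. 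Hence $N(v_\ell,a)=N(v_i,v_\ell)$, and in exactly the same way one obtains $N(v_k,a)=N(v_i,v_k)$ and $N(v_j,a)=N(v_i,v_j)$. By the symmetric argument applied to $b\in A_j$, the quadruple $\{v_i,v_k,v_\ell,b\}$ is also a $K_4$ and yields $N(v_m,b)=N(v_j,v_m)$ for every $m\in\{i,k,\ell\}$.

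Next, I would check that $\{v_k,v_\ell,a,b\}$ itself forms a $K_4$. The triples $v_kv_\ell a$ and $v_kv_\ell b$ are immediate from $a\in A_i$ and $b\in A_j$; for $v_kab$ one uses $b\in A_j\subseteq N(v_i,v_k)=N(v_k,a)$ (by the previous paragraph), and $v_\ell ab$ is symmetric. Having produced this third $K_4$, a single application of Fact~\ref{setminus} to it, combined with the identity $N(v_k,v_\ell)=V(H)\setminus N(v_i,v_j)$ from the original $K_4$, gives
\[
N(a,b)=V(H)\setminus N(v_k,v_\ell)=N(v_i,v_j),
\]
as desired.

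The only conceptual hurdle is recognizing that $v_i$ and $a$ play interchangeable roles once $\{v_j,v_k,v_\ell,a\}$ is identified as a $K_4$; after that, the argument is pure bookkeeping with Fact~\ref{setminus}, and no new $C_5$ needs to be exhibited.
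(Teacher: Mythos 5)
Your proof is correct and takes essentially the same route as the paper: build the auxiliary $K_4$ on $\{a,v_j,v_k,v_\ell\}$, use Fact~\ref{setminus} to identify $N(a,v_k)=N(v_i,v_k)$ and $N(a,v_\ell)=N(v_i,v_\ell)$, deduce that $\{a,b,v_k,v_\ell\}$ is a $K_4$, and apply Fact~\ref{setminus} once more. The only difference is that you also derive the symmetric identities for $b$ (the $K_4$ on $\{v_i,v_k,v_\ell,b\}$), which the paper skips as unnecessary.
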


\begin{proof}
Since $a\in A_i$, we obtain a $K_4$ on vertices $a, v_j, v_k, v_{\ell}$. Then Fact~\ref{setminus} implies that \[N(v_i,v_k)=V(H)\setminus(N(v_j,v_{\ell}))=N(a,v_k).\]
Similarly, we have $N(v_i,v_{\ell})=N(a,v_{\ell})$. Since $b\in A_j$, $b\in N(a,v_k)=N(v_i,v_k)$ and $b\in N(v_i,v_{\ell})$, we obtain another $K_4$ on vertices $a, b, v_k, v_{\ell}$. It then follows from Fact~\ref{setminus} that \[N(v_i,v_j)=V(H)\setminus(N(v_k,v_{\ell}))=N(a,b). \qedhere\]
\end{proof}

\begin{fact}\label{kkk}
There exists an integer $q$ depending on $i, j, k, \ell$ such that 
\[
|A_i|-|B_i|=|A_j|-|B_j|=|A_k|-|B_k|=|A_{\ell}|-|B_{\ell}|=q. 
\]
\end{fact}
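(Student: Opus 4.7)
The plan is to use the partition of $V(H)$ into the $8$ blocks $A_\alpha, B_\alpha$ (established immediately before Fact~\ref{kkk}) to read off a clean decomposition of each set $N(v_a,v_b)$, and then extract a small linear system by comparing sizes across the three pair-partitions of $\{i,j,k,\ell\}$ appearing in \eqref{eq:pair}.

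First I would verify the combinatorial identity
\[ N(v_a,v_b) \;=\; B_a \cup B_b \cup A_c \cup A_d \]
for every partition $\{a,b\}\cup\{c,d\}=\{i,j,k,\ell\}$. The inclusion $\supseteq$ is immediate from the definitions of the four blocks on the right, each of which is an intersection of three $N$-sets one of which is $N(v_a,v_b)$. For $\subseteq$, any $v\in N(v_a,v_b)$ lies in exactly one of the eight blocks by the partition property, and Fact~\ref{setminus} instantly rules out the remaining four blocks: each of $A_a, A_b, B_c, B_d$ is contained in $N(v_c,v_d)=V(H)\setminus N(v_a,v_b)$.

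Combining this decomposition with $|N(v_a,v_b)|=n/2$ from Fact~\ref{111}, each of the three pairings in \eqref{eq:pair} yields
\[ |B_a|+|B_b|+|A_c|+|A_d| \;=\; n/2 \;=\; |A_a|+|A_b|+|B_c|+|B_d|. \]
Writing $x_\alpha\eqdef |A_\alpha|-|B_\alpha|$ and subtracting, these three identities become
\[ x_i+x_j=x_k+x_\ell,\qquad x_i+x_k=x_j+x_\ell,\qquad x_i+x_\ell=x_j+x_k. \]
Pairwise addition of these three relations yields $x_i=x_\ell$, $x_i=x_k$, and $x_i=x_j$ respectively, so all four differences coincide; their common value is the desired integer $q$.

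The only real obstacle is the preliminary bookkeeping identifying the correct four blocks inside each $N(v_a,v_b)$; once that identity is in place, the rest is a trivial $4$-variable linear-algebra exercise, and integrality of $q$ is automatic from integrality of $|A_\alpha|$ and $|B_\alpha|$.
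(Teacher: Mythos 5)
Your proof is correct and follows essentially the same approach as the paper's: identify which four of the eight blocks $A_\alpha,B_\alpha$ lie inside each $N(v_a,v_b)$, invoke $|N(v_a,v_b)|=n/2$ from Fact~\ref{111}, and finish with elementary algebra. The only cosmetic difference is that you compare the two complementary sets in each pairing of \eqref{eq:pair} and solve the resulting $3\times 4$ linear system, whereas the paper compares the two $N$-sets sharing the index $k$ (so that the common blocks $A_\ell$ and $B_k$ cancel immediately) and then invokes symmetry; both routes are one-line manipulations. One small note in your favor: your decomposition $N(v_a,v_b)=B_a\cup B_b\cup A_c\cup A_d$ carries the correct labels, while the displayed identity in the paper's proof of this fact has its $A$- and $B$-labels transposed (it should read $|A_j|+|A_\ell|+|B_k|+|B_i|=|N(v_k,v_i)|$), a harmless slip since the subtraction step gives the same conclusion either way.
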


\begin{proof} By symmetry, it suffices to show that $|A_i|-|B_i|=|A_j|-|B_j|$. From Fact~\ref{111} we deduce
\[|A_i|+|A_k|+|B_j|+|B_{\ell}|=|N(v_k,v_i)|=n/2=|N(v_k,v_j)|=|A_j|+|A_k|+|B_i|+|B_{\ell}|,\]
and so $|A_i|-|B_i|=|A_j|-|B_j|$. 
\end{proof}

\begin{fact}\label{BBB}
If $a\in B_i$, then

\smallskip

(1) $N(a,v_j)\cap (A_{k}\cup B_{k})=\varnothing$, and 

\smallskip

(2) $|A_i|+|A_j|+|B_i|+|B_j|\ge 1+n/2$. 
\end{fact}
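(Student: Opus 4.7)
The plan is to prove (1) by constructing explicit $C_5$-copies from each putative violator, and then to leverage (1) twice, via the $k \leftrightarrow \ell$ symmetry, to obtain (2).

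For (1), the hypothesis $a \in B_i$ gives $a v_i v_j, a v_i v_k, a v_i v_\ell \in H$. Suppose first that $b \in N(a,v_j) \cap A_k$: then the definition of $A_k$ supplies the three edges $b v_i v_j, b v_j v_\ell, b v_\ell v_i$. Combining these with $a v_j b \in H$ and $a v_i v_\ell \in H$, I would check that the cyclic sequence $a, v_i, v_\ell, b, v_j$ produces a $C_5$, whose five consecutive triples $a v_i v_\ell$, $v_i v_\ell b$, $v_\ell b v_j$, $b v_j a$, $v_j a v_i$ are all edges by the facts just listed, contradicting the $C_5$-freeness of $H$. For the second case $b \in N(a,v_j) \cap B_k$, the definition of $B_k$ gives the edges $b v_i v_k, b v_j v_k \in H$, and I would try the cyclic ordering $v_i, a, v_j, b, v_k$: its five consecutive triples $v_i a v_j$, $a v_j b$, $v_j b v_k$, $b v_k v_i$, $v_k v_i a$ are all edges by combining the $B_i$, $B_k$, and $N(a, v_j)$ memberships. (Distinctness of the five vertices in each case follows because $A_k, B_k$ are disjoint from $\{v_i, v_j, v_\ell\}$ by the partition in Fact~\ref{111}, and $v_k \notin N(a,v_j)$ since $a \in B_i$ forces $a \notin N(v_j,v_k)$.)

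For (2), observe that swapping $k$ and $\ell$ does not change the hypothesis $a \in B_i$, the vertex $v_j$, nor the set $N(a,v_j)$, but it interchanges $A_k \leftrightarrow A_\ell$ and $B_k \leftrightarrow B_\ell$. So applying (1) after this relabeling also yields $N(a,v_j) \cap (A_\ell \cup B_\ell) = \varnothing$. Since by Facts~\ref{111} and \ref{setminus} the eight sets $A_i, A_j, A_k, A_\ell, B_i, B_j, B_k, B_\ell$ partition $V(H)$, I conclude $N(a,v_j) \subseteq A_i \cup A_j \cup B_i \cup B_j$. Next, $a \in B_i$ ensures $a v_i v_j \in H$, so $v_i \in N(a,v_j)$ and hence $|N(a,v_j)| \ge \delta_2^+(H) = n/2$. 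Finally, $a$ itself lies in $B_i \subseteq A_i \cup A_j \cup B_i \cup B_j$ but not in $N(a,v_j)$, giving the extra $+1$ so that $|A_i|+|A_j|+|B_i|+|B_j| \ge |N(a,v_j)| + 1 \ge 1 + n/2$.

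The main obstacle I anticipate is part (1): guessing the correct cyclic ordering of the five vertices in each subcase so that all five consecutive triples happen to be edges of $H$. Once the right orderings are identified, both parts become straightforward; part (2) is essentially bookkeeping with the partition of $V(H)$ and a single application of the $k \leftrightarrow \ell$ symmetry.
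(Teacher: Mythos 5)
Your proof is correct. The overall structure matches the paper's: split part (1) into the $A_k$ and $B_k$ subcases, then derive part (2) from (1) via the $k \leftrightarrow \ell$ symmetry and a co-degree count. There are two small but genuine variations. First, in the $A_k$ subcase you build an explicit $C_5$ on $a, v_i, v_\ell, b, v_j$ (which checks out: the five consecutive triples are all edges by $a \in B_i$, $b \in A_k$, and $b \in N(a,v_j)$), whereas the paper instead invokes Fact~\ref{AAA} to get $N(b,v_j)=N(v_k,v_j)$, placing $a$ in $N(v_j,v_k)$ and contradicting Fact~\ref{111}'s count that $a \in B_i$ lies in exactly the three sets $N(v_i,\cdot)$. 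Your route is more hands-on and makes the $A_k$ case parallel to the $B_k$ case, at the cost of an extra $C_5$ verification; the paper's is slightly more abstract but reuses machinery already proved. Second, for the extra $+1$ in part (2) you observe $a \in B_i$ and $a \notin N(a,v_j)$, while the paper uses $v_j \in A_j$ and $v_j \notin N(a,v_j)$; these are interchangeable. One small nit: the partition of $V(H)$ into the eight sets $A_\bullet, B_\bullet$ follows from Facts~\ref{111} \emph{and}~\ref{setminus} together (as stated in the paper just before Fact~\ref{AAA}), not from Fact~\ref{111} alone as your parenthetical suggests, but this does not affect correctness.
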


\begin{proof}
If $b\in N(a,v_j)\cap A_{k}$, then Fact~\ref{AAA} implies $a\in N(b,v_j)=N(v_k,v_j)$, which contradicts the definition of $B_i$. If $b\in N(a,v_j)\cap B_{k}$, then $av_iv_k, v_iv_kb, v_kbv_j, bv_ja, v_jav_i$ form a $C_5$, a contradiction. We conclude (1). Notice that (1) also implies that $N(a, v_j) \cap (A_{\ell} \cup B_{\ell}) = \varnothing$, by symmetry. To see (2), from (1) we deduce that $\{v_j\}\cup N(a,v_j)\subseteq A_i\cup B_i\cup A_j\cup B_j$. Since $v_i\in N(a,v_j)$, the assumption $\delta_2^+(H) = n/2$ tells us that $|N(a,v_j)| \ge n/2$, and so
\[n/2+1\le |\{v_j\}\cup N(a,v_j)|\le |A_i|+|B_i|+|A_j|+|B_j|. \qedhere\]
\end{proof}

If there exists a $K_4$ on vertices $v_i, v_j, v_k, v_{\ell}$ such that $B_i$, $B_j$, $B_k$, $B_{\ell}$ are all empty, then Fact~\ref{kkk} implies $|A_i| = |A_j| = |A_k| = |A_{\ell}| = q$, $n = 4q$, and so $4\mid n$. If there exists a $K_4$ on vertices $v_i, v_j, v_k, v_{\ell}$ such that $B_i$, $B_j$ are both non-empty, then from Fact~\ref{BBB} we deduce that 
\begin{align*}
|A_i|+|A_k|+|B_i|+|B_k| &\ge n/2+1, \\
|A_j|+|A_l|+|B_j|+|B_l| &\ge n/2+1. 
\end{align*}
By adding the inequalities, we obtain $n \ge n+2$, a contradiction. We henceforth assume that for any $K_4$ in $H$ with vertices $v_i, v_j, v_k, v_{\ell}$, exactly one of $B_i$, $B_j$, $B_k$, $B_{\ell}$ is non-empty. 

\begin{fact}\label{B2}
If $|B_i|=r>0$, then

\smallskip

(1) $n=4q+2r$, 

\smallskip

(2) $|A_i| > n/4 > |A_j|$, 

\smallskip

(3) $N(a,b)=\varnothing$ for every distinct $a, b \in A_j$. 

\end{fact}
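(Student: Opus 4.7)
Parts (1) and (2) are essentially computational. Under the standing assumption (established just before Fact~\ref{B2}) that for any $K_4$ in $H$ exactly one of the four $B$-classes is non-empty, the hypothesis $|B_i|=r>0$ forces $|B_j|=|B_k|=|B_\ell|=0$, and Fact~\ref{kkk} then pins down $|A_i|=q+r$ and $|A_j|=|A_k|=|A_\ell|=q$. Summing the eight parts of the partition of $V(H)$ yields $n=(q+r)+3q+r=4q+2r$, which is (1). Substituting gives $n/4=q+r/2$, and the strict inequalities $|A_j|=q<n/4<q+r=|A_i|$ follow immediately from $r>0$, proving (2).

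For (3) my plan is to argue by contradiction: assume distinct $a,b\in A_j$ and some $c\in N(a,b)$, and show that $c$ is forced to lie in the small set $A_j\cup B_i$. I case-split on the partition class containing $c$. If $c\in A_m$ for some $m\in\{i,k,\ell\}$, then since $a\in A_j$ and $c\in A_m$ with $m\neq j$, Fact~\ref{AAA} gives $N(a,c)=N(v_j,v_m)$; the edge $abc\in H$ means $b\in N(a,c)$, so $b\in N(v_j,v_m)$. But Fact~\ref{111} combined with $b\in A_j$ places $b$ in exactly the three $N$-sets that avoid $v_j$, so $b\notin N(v_j,v_m)$, a contradiction. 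The classes $B_j,B_k,B_\ell$ are empty by our standing assumption, so $c$ cannot lie there either. Hence $c\in A_j\cup B_i$.

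To close, the inclusion $N(a,b)\subseteq(A_j\cup B_i)\setminus\{a,b\}$ gives $|N(a,b)|\le q+r-2$, whereas $N(a,b)\neq\varnothing$ together with $\delta_2^+(H)\ge n/2$ forces $|N(a,b)|\ge n/2=2q+r$; the inequality $2q+r\le q+r-2$ is impossible.

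I do not foresee a genuine obstacle---the argument is essentially bookkeeping given Facts~\ref{111}, \ref{AAA}, and \ref{kkk}. The only mild subtlety is to verify the complete case analysis: Fact~\ref{AAA} rules out $c$ lying in any $A$-class other than $A_j$, while the one-non-empty-$B$ assumption rules out three of the four $B$-classes. Once $c$ has been confined to $A_j\cup B_i$, the numerical mismatch with $\delta_2^+(H)\ge n/2$ finishes the job in one line.
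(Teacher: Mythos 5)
Your proof is correct and follows essentially the same path as the paper's. For part (3), you invoke Fact~\ref{AAA} with the labels $(a\in A_j,\,c\in A_m)$ to rule out $c$ lying in $A_i\cup A_k\cup A_\ell$, whereas the paper takes $(w\in A_i,\,b\in A_j)$ and places $a$ in the wrong $N$-set; these are the same argument up to relabeling, and both then confine $N(a,b)$ to $A_j\cup B_i$ and finish by comparing $q+r$ with $n/2=2q+r$.
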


\begin{proof}
Since $B_j=B_k=B_{\ell}=\varnothing$, we obtain from Fact~\ref{kkk} that 
\[
|A_i|=q+r, \qquad |A_j|=|A_k|=|A_{\ell}|=q. 
\]
So, $n=|A_i|+|A_j|+|A_k|+|A_{\ell}|+|B_i|+|B_j|+|B_k|+|B_{\ell}|=4q+2r$. This shows (1) and (2). 

Suppose $w\in A_i$. If $w\in N(a,b)$, from Fact~\ref{AAA} we deduce that $a\in N(w,b)=N(v_i,v_j)$ by $w\in A_i$, which contradicts $a\in A_j$. It follows that $N(a,b)\cap A_i=\varnothing$. Similarly, we have $N(a,b)\cap (A_k\cup A_{\ell})=\varnothing$. So, $N(a,b)\subseteq B_i\cup A_j$, and hence
\[N(a,b)\le |A_j|+|B_i|=q+r<n/2.\]
This proves (3), since $\delta_2^+(H) = n/2$. 
\end{proof}

\begin{fact}\label{same}
Suppose $B_i\neq \varnothing$. Then $N(a,v_j)=\varnothing$ if and only if $a\in A_j$. 
\end{fact}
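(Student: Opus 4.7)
My plan is to exploit the 8-set partition $V(H) = A_i \cup A_j \cup A_k \cup A_\ell \cup B_i \cup B_j \cup B_k \cup B_\ell$ coming from the given $K_4$, together with the standing assumption that among the four $B$-classes only $B_i$ is nonempty. Throughout I use that $v_j \in A_j$, since $v_j \in N(v_i, v_k) \cap N(v_k, v_\ell) \cap N(v_\ell, v_i)$ by the $K_4$ edges; in particular $q = |A_j| \ge 1$.

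The reverse direction is immediate. If $a = v_j$ the statement is trivial (as $N(v_j, v_j) = \varnothing$ by convention and $v_j \in A_j$); otherwise, note that a vertex lying in $A_i$, $A_k$, $A_\ell$, or $B_i$ always belongs to $N(v_j, v)$ for some $v \in \{v_i, v_k, v_\ell\}$ (for example, $a \in B_i \subseteq N(v_i, v_j)$ forces $v_i \in N(a, v_j)$), so any such membership contradicts $N(a, v_j) = \varnothing$. Since $B_j$, $B_k$, $B_\ell$ are empty by the standing assumption, $a$ must lie in $A_j$.

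The forward direction is the substantive one, and I will handle it by contradiction. Assume $a \in A_j$ and pick some $c \in N(a, v_j)$, so that $v_j \in N(a, c)$. I claim $N(a, v_j) \subseteq B_i$, which I verify by case analysis on where $c$ sits in the partition. First, $B_j, B_k, B_\ell$ are empty. Second, if $c \in A_j$ then $a, c$ are distinct elements of $A_j$ (since $c \neq a$ follows from $c \in N(a, v_j)$), so Fact~\ref{B2}(3) gives $N(a, c) = \varnothing$, contradicting $v_j \in N(a, c)$. Third, if $c \in A_x$ for some $x \in \{i, k, \ell\}$, then Fact~\ref{AAA} yields $N(a, c) = N(v_j, v_x)$, and $v_j \notin N(v_j, v_x)$ is again a contradiction. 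Hence $c \in B_i$, so $N(a, v_j) \subseteq B_i$. Since $N(a, v_j)$ is nonempty, the minimum positive co-degree condition forces $|B_i| \ge |N(a, v_j)| \ge n/2$. But Fact~\ref{B2}(1) gives $n = 4q + 2|B_i|$, equivalently $|B_i| = n/2 - 2q \le n/2 - 2$, a contradiction. The main difficulty is spotting the right case analysis for $c$; once one sees $N(a, v_j) \subseteq B_i$, the size formula from Fact~\ref{B2}(1) finishes the job.
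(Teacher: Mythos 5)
Your argument is correct, but the direction $a\in A_j\Rightarrow N(a,v_j)=\varnothing$ is far longer than it needs to be, and you already hold the key in your hands: you note at the outset that $v_j\in A_j$. Hence for $a\in A_j$ with $a\neq v_j$, the pair $a,v_j$ consists of two distinct vertices of $A_j$, and Fact~\ref{B2}(3) applied \emph{directly to this pair} gives $N(a,v_j)=\varnothing$ in one line (the case $a=v_j$ being the trivial convention). This is exactly the paper's argument. Your case analysis on where $c\in N(a,v_j)$ sits, culminating in $N(a,v_j)\subseteq B_i$ and the size contradiction via $n=4q+2|B_i|$, is a valid but roundabout detour: you even invoke Fact~\ref{B2}(3) inside that case analysis, just for the pair $(a,c)$ instead of the pair $(a,v_j)$.

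For the converse ($N(a,v_j)=\varnothing\Rightarrow a\in A_j$), you rule out the other seven parts of the eight-set partition, using that $B_j,B_k,B_\ell$ are empty; the paper instead applies Fact~\ref{111} directly (a vertex missing from $N(v_j,v_i)\cup N(v_j,v_k)\cup N(v_j,v_\ell)$ must lie in the complementary three sets, i.e.\ in $A_j$). These are essentially the same argument, though the paper's version has the small advantage of not needing the emptiness of the other $B$-classes and hence survives under the weakening mentioned in the Remark following the fact.
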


\begin{proof}
According to Fact~\ref{B2}, $a \in A_j$ implies $N(a, v_j) = \varnothing$. If $N(a,v_j)=\varnothing$, then 
\[
a\notin N(v_j,v_i)\cup N(v_j,v_k)\cup N(v_j,v_{\ell}). 
\]
It follows from Fact~\ref{111} that 
\[
a\in N(v_k,v_{\ell})\cap N(v_i,v_{\ell})\cap N(v_i,v_k) = A_j. \qedhere
\]
\end{proof}

\paragraph{Remark.} 
It can be shown that Fact~\ref{same} holds even if we drop the assumption $B_i \neq \varnothing$. However, we omit the proof of this stronger version since we do not need it in the proof of Theorem 4.

\begin{fact}\label{K4}
If $B_i\neq \varnothing$, then for any $a\in B_i$, there exists $b\in B_i$ such that $a, v_i, v_j, b$ form a $K_4$ in $H$. 
\end{fact}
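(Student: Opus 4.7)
The plan is to exhibit $b \in B_i \setminus \{a\}$ with $abv_i, abv_j \in H$; combined with the automatic edges $av_iv_j$ and $v_iv_jb$ (since $a, b \in B_i \subseteq N(v_i, v_j)$), this gives the required $K_4$ on $\{a, v_i, v_j, b\}$. I will first determine $N(a, v_j)$ and $N(a, v_i)$ inside the partition $V(H) = A_i \cup A_j \cup A_k \cup A_\ell \cup B_i$, then establish the key structural identity $N(a, v_i) \cap A_i = \varnothing$ via an explicit $C_5$, and conclude by inclusion-exclusion.

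Locating $N(a, v_j)$ is the easy half. By Fact~\ref{BBB}(1) and its $\ell$-symmetric version, $N(a, v_j) \cap (A_k \cup A_\ell) = \varnothing$; by Fact~\ref{same}, $N(a, v_j) \cap A_j = \varnothing$. Hence $N(a, v_j) \subseteq A_i \cup B_i$. Conversely, Fact~\ref{AAA} with any $w \in A_i$ and $v_j \in A_j$ gives $N(w, v_j) = N(v_i, v_j)$, and $a \in B_i \subseteq N(v_i, v_j)$; hence $A_i \subseteq N(a, v_j)$. Writing $X := N(a, v_j) \cap B_i$, I get $N(a, v_j) = A_i \cup X$ (a disjoint union) with $|X| \geq n/2 - |A_i| = q$. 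An analogous use of Fact~\ref{AAA} (taking $w$ in turn from $A_j, A_k, A_\ell$) also shows $A_j \cup A_k \cup A_\ell \subseteq N(a, v_i)$.

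The main obstacle is to show $N(a, v_i) \cap A_i = \varnothing$. I plan to argue by contradiction: suppose $w \in A_i \setminus \{v_i\}$ satisfies $a w v_i \in H$. Then the 5-tuple $(a, v_i, v_k, v_j, w)$ is a $C_5$ in $H$, because its five consecutive triples
\[
a v_i v_k, \quad v_i v_k v_j, \quad v_k v_j w, \quad v_j w a, \quad w a v_i
\]
are all edges: the first since $a \in B_i \subseteq N(v_i, v_k)$; the second from the original $K_4$; the third since $w \in A_i \subseteq N(v_j, v_k)$; the fourth since $w \in A_i \subseteq N(a, v_j)$ by the previous paragraph; the fifth by hypothesis. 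The five vertices are clearly distinct (the sets $A_i, A_j, A_k, B_i$ are pairwise disjoint and $w \neq v_i$), contradicting the $C_5$-freeness of $H$.

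Once this structural identity is in hand, $N(a, v_i) = A_j \cup A_k \cup A_\ell \cup M$ with $M := N(a, v_i) \cap B_i$, so $|M| \geq n/2 - 3q = r - q$. Both $X$ and $M$ are subsets of $B_i \setminus \{a\}$ (of size $r - 1$), so inclusion-exclusion yields $|X \cap M| \geq |X| + |M| - (r-1) \geq q + (r - q) - (r - 1) = 1$. Any $b \in X \cap M$ then satisfies $abv_j \in H$ (from $b \in X$) and $abv_i \in H$ (from $b \in M$), completing the desired $K_4$ on $\{a, v_i, v_j, b\}$.
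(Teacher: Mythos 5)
Your proposal is correct and takes essentially the same approach as the paper: both bound $|N(a,v_j)\cap(B_i\setminus\{a\})|$ and $|N(a,v_i)\cap(B_i\setminus\{a\})|$ from below (by $q$ and $r-q$ respectively), establish the key identity $N(a,v_i)\cap A_i=\varnothing$ by exhibiting a forbidden $C_5$, and conclude via inclusion--exclusion on $B_i\setminus\{a\}$. The only cosmetic difference is the cyclic ordering of the $C_5$ you produce, and that you record the inclusions $A_i\subseteq N(a,v_j)$ and $A_j\cup A_k\cup A_\ell\subseteq N(a,v_i)$ explicitly rather than invoking Fact~\ref{AAA} directly when building the cycle.
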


\begin{proof}
Let $r=|B_i|$. Then Fact~\ref{B2} implies $|A_i|=q+r, \, |A_j|=|A_k|=|A_{\ell}|=q, \, n=4q+2r$. 

From Facts~\ref{BBB} and~\ref{B2}, we obtain $N(a,v_j)\cap (A_j\cup A_k\cup A_{\ell})=\varnothing$. So, $N(a,v_j)\subseteq A_i\cup (B_i\setminus\{a\})$, and hence
\begin{equation} \label{eq:q}
|N(a,v_j)\cap (B_i\setminus\{a\})|\ge \frac{n}{2}-|A_i|=q. 
\end{equation}

If $c \in N(a,v_i)\cap A_i$, then Fact~\ref{AAA} implies that $N(c,v_k)=N(v_i,v_k)$, and so $\{a,v_j\}\subseteq N(c,v_k)$. This shows that $v_iac, acv_k, cv_kv_j, v_kv_jv_i, v_jv_ia$ form a $C_5$, a contradiction. So, $N(a,v_i)\cap A_i=\varnothing$. Therefore, we obtain $N(a,v_i)\subseteq A_j\cup A_k\cup A_{\ell}\cup (B_i\setminus\{a\})$, and hence
\begin{equation} \label{eq:r-q}
|N(a,v_i)\cap (B_i\setminus\{a\})|\ge \frac{n}{2}-|A_j|-|A_k|-|A_{\ell}|= r-q. 
\end{equation}

Thus, by writing $X \eqdef B_i \setminus \{a\}$, we deduce from inequalities \eqref{eq:q} and \eqref{eq:r-q} that
\begin{align*}
|N(a,v_i)\cap N(a,v_j)\cap X|&\ge |N(a,v_i)\cap X|+|N(a,v_j)\cap X|-|X|\\
&\ge q+(r-q)-(r-1)=1.
\end{align*}
Pick $b\in N(a,v_i)\cap N(a,v_j)\cap X$. Then $b\in B_i$ and $a, v_i, v_j, b$ form a $K_4$ in $H$. 
\end{proof}

Recall that $v_1, v_2, v_3, v_4$ form a $K_4$ in $H$. Assume without loss of genearlity that $|B_1^{234}|=r_0>0$.

\begin{fact}\label{56}
Suppose $v_5,v_6\in B_1^{234}$ and $v_1, v_2, v_5, v_6$ form a $K_4$. Then $B_1^{256}\neq \varnothing$. 
\end{fact}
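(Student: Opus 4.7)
My plan is to produce an explicit element of $B_1^{256}$, namely $v_3$. The argument is a direct unwinding of the definition, and uses no machinery beyond the hypothesis of the fact together with the original $K_4$ on $v_1, v_2, v_3, v_4$. Recall that
\[
B_1^{256} = N(v_1,v_2)\cap N(v_1,v_5)\cap N(v_1,v_6),
\]
so it suffices to verify each of the three memberships for $v_3$.

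The first, $v_3 \in N(v_1, v_2)$, is immediate because $v_1, v_2, v_3, v_4$ form a $K_4$, so in particular $v_1v_2v_3$ is an edge. The second, $v_3 \in N(v_1, v_5)$, follows from the hypothesis $v_5 \in B_1^{234} \subseteq N(v_1, v_3)$: this says $v_1v_3v_5$ is an edge of $H$, which is exactly $v_3 \in N(v_1, v_5)$. The third, $v_3 \in N(v_1, v_6)$, is obtained identically from $v_6 \in B_1^{234} \subseteq N(v_1, v_3)$. Combining the three memberships gives $v_3 \in B_1^{256}$.

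For rigour I should also confirm that $v_3$ is a genuine witness, i.e.\ $v_3 \notin \{v_1, v_2, v_5, v_6\}$. The inequalities $v_3 \ne v_1, v_2$ are clear from the $K_4$ on $v_1, v_2, v_3, v_4$. For $v_5, v_6$: by Facts~\ref{111} and~\ref{setminus}, the eight sets of the form $A_\bullet^{\cdots}$ and $B_\bullet^{\cdots}$ partition $V(H)$; since $v_3 \in A_3^{124}$ while $v_5, v_6 \in B_1^{234}$ lie in a different block of the partition, we have $v_3 \notin \{v_5, v_6\}$.

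No step here presents a real obstacle — the fact is essentially a tautology once the definition of $B_1^{256}$ is written out — so I expect the content of this lemma to be its use downstream (presumably as the base of an induction or bootstrap that enlarges the collection of known $K_4$'s in $H$), rather than difficulty in its own proof.
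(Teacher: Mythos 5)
Your proposal is correct, and it takes a genuinely different and considerably shorter route than the paper. The paper argues indirectly: it first uses Fact~\ref{AAA} and Fact~\ref{setminus} to show the inclusion $A_1^{234}\subseteq A_1^{256}$, and then, assuming $B_1^{256}=\varnothing$, invokes the standing convention that exactly one $B$-class of any $K_4$ is nonempty together with Fact~\ref{B2}(2) to get $n/4<|A_1^{234}|\le|A_1^{256}|<n/4$, a contradiction. You instead produce an explicit witness: since $v_5,v_6\in B_1^{234}\subseteq N(v_1,v_3)$, the triples $v_1v_3v_5$ and $v_1v_3v_6$ are edges, so $v_3\in N(v_1,v_5)\cap N(v_1,v_6)$; combined with $v_3\in N(v_1,v_2)$ from the original $K_4$, this gives $v_3\in B_1^{256}$. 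This needs nothing beyond the definitions and is more self-contained, avoiding both Fact~\ref{B2} and the ``exactly one nonempty $B$'' convention. Your distinctness check $v_3\notin\{v_5,v_6\}$ via the eight-set partition is valid, though in fact redundant: each of $v_1,v_2,v_5,v_6$ lies in its own $A$-class of the new $K_4$ and hence outside $B_1^{256}$, so membership in $B_1^{256}$ already forces $v_3$ to be a fifth vertex. The only thing the paper's route buys is the side inclusion $A_1^{234}\subseteq A_1^{256}$, but that inclusion is not reused elsewhere, so your argument is a net simplification.
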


\begin{proof}
For any $x\in A_1^{234}$, Fact~\ref{AAA} implies that $v_5\in N(v_1,v_2)=N(x,v_2)$, and hence $x\in N(v_2,v_5)$. Similarly, $x\in N(v_2,v_6)$. Since $x\notin N(v_1,v_2)$, Fact~\ref{setminus} implies $x\in N(v_5,v_6)$, and so $A_1^{234}\subseteq A_1^{256}$. 

If $B_1^{256}=\varnothing$, then from Fact~\ref{B2} we deduce that $n/4<|A_1^{234}|\le|A_1^{256}|<n/4$, a contradiction. 
\end{proof}

Introduce a binary relation $\sim$ on $B_1^{234}$, where $a \sim b$ if and only if $N(a, b) = \varnothing$. 

\begin{fact}\label{class} 
$\sim$ is an equivalence relation. 
\end{fact}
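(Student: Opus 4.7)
The reflexivity of $\sim$ is immediate from the convention $N(u, u) = \varnothing$, and symmetry from $N(a, b) = N(b, a)$. The crux is transitivity, which I would prove by contradiction. Suppose $a, b, c \in B_1^{234}$ are distinct with $a \sim b$, $b \sim c$, but $a \not\sim c$; pick $w \in N(a, c)$.

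The plan is to iterate Fact~\ref{K4} and Fact~\ref{111} to severely restrict where $w$ can sit, then finish with a case analysis. By Fact~\ref{K4} there exists $a' \in B_1^{234} \setminus \{a\}$ with $\{a, v_1, v_2, a'\}$ a $K_4$, and the assumption $N(a, b) = \varnothing$ forces $a' \neq b$. Apply Fact~\ref{111} to $\{a, v_1, v_2, a'\}$ with the extra vertex $b$: the condition $N(a, b) = \varnothing$ rules $b$ out of $N(a, v_1)$, $N(a, v_2)$, and $N(a, a')$, so for $b$ to lie in exactly three of the six pair-neighborhoods it must belong to $N(v_1, v_2) \cap N(v_1, a') \cap N(v_2, a')$. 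Hence $\{b, v_1, v_2, a'\}$ is also a $K_4$. Applying the same argument to $\{b, v_1, v_2, a'\}$ with $c$ (using $N(b, c) = \varnothing$) produces a third $K_4$ $\{c, v_1, v_2, a'\}$. Finally, Fact~\ref{111} applied to $\{c, v_1, v_2, a'\}$ with $a$: the original $K_4$ already gives $a \in N(v_1, v_2) \cap N(v_1, a') \cap N(v_2, a')$ (three neighborhoods), so the remaining three must exclude $a$, yielding $v_1, v_2, a' \notin N(a, c)$. Running this chain with partners of $a$ via $(v_1, v_3)$ and $(v_1, v_4)$ in place of $(v_1, v_2)$ yields $N(a, c) \cap \{v_1, v_2, v_3, v_4\} = \varnothing$ and that $N(a, c)$ avoids every partner of $a$ in the original $K_4$.

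A case analysis on $w$ now completes the argument. If $w \in A_i^{jk\ell} \setminus \{v_i\}$ for some $i$, then $\{w, v_j, v_k, v_\ell\}$ with $\{j, k, \ell\} = \{1, 2, 3, 4\} \setminus \{i\}$ is a new $K_4$ in $H$, and the pair-partition consequence of Fact~\ref{111} forces $a, b, c$ to inhabit a common $B$-set of this new $K_4$. Re-running the iteration inside the new $K_4$, with $w$ playing the role of a $v_j$-analogue in the final exclusion step, produces $w \notin N(a, c)$, the desired contradiction. The case $w \in B_1^{234} \setminus \{a, b, c\}$ is the main obstacle: here one applies Fact~\ref{K4} to $w$ to find a partner $w''$, forms the $K_4$ $\{w, v_1, v_2, w''\}$, and pushes the same Fact~\ref{111} argument through. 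The delicate point is that $w$ is not $\sim$-equivalent to $a$ or $c$, so the "partner propagation" from Steps above is not automatic; one likely needs to split on whether $w \in N(a, v_1)$ and on whether $b \sim w$ in order to still isolate a contradiction from Fact~\ref{111} applied to a suitable $K_4$ containing $\{a, c\}$ or $\{w, v_1, v_2\}$.
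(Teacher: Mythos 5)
Your proof has a genuine gap in the case $w \in B_1^{234}$, which you yourself flag as incomplete ("one likely needs to split on whether $w \in N(a,v_1)$ and on whether $b \sim w$..."). The issue is structural: your Fact~\ref{111} chain, run with a partner $a'$ of $a$ over the pair $(v_1, v_j)$, can only ever exclude the three vertices $v_1, v_j, a'$ from $N(a,c)$. Iterating over $j \in \{2,3,4\}$ excludes $v_1, \dots, v_4$ and at most three specific partners of $a$. But $B_1^{234}$ can be large, and there is no reason every vertex of $B_1^{234} \setminus \{a,b,c\}$ is a partner of $a$ via some $(v_1, v_j)$ — Fact~\ref{K4} only guarantees existence of one partner per pair, not that partners cover $B_1^{234}$. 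So a vertex-by-vertex exclusion of $N(a,c)$ cannot close this case with the tools you invoke, and the sketchy final paragraph does not fill the hole.

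The paper's proof is shorter and complete precisely because it avoids vertex-by-vertex exclusion. Taking $b$ as the middle element (equivalently, $v_5$ with $v_5 \sim v_7$ and $v_5 \sim v_8$), it applies Fact~\ref{K4} to $b$ to get a partner $b'$, forming the $K_4$ on $\{b, v_1, v_2, b'\}$. Fact~\ref{56} gives that $B_1$ of this new $K_4$ is non-empty, so Fact~\ref{same} applies: $N(a,b) = \varnothing$ and $N(c,b) = \varnothing$ force $a, c \in A_b$ of the new $K_4$. Then Fact~\ref{B2}(3) — which bounds $|N(a,c)|$ below $n/2$ by showing $N(a,c) \subseteq B_1 \cup A_b$ — immediately gives $N(a,c) = \varnothing$. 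The contrast is instructive: Fact~\ref{B2}(3) disposes of \emph{all} of $N(a,c)$ at once by a counting argument, whereas your chain rules out only finitely many named vertices at a time. Note also that you apply Fact~\ref{K4} to $a$ (an endpoint) rather than to $b$ (the middle element), which is why you never land both $a$ and $c$ in a common $A$-set and cannot invoke Fact~\ref{B2}(3); choosing the partner of the middle element is the key move you are missing.

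Your first two chain steps (deducing that $\{b, v_1, v_2, a'\}$ and then $\{c, v_1, v_2, a'\}$ are $K_4$'s from $N(a,b) = N(b,c) = \varnothing$ and Fact~\ref{111}) are correct and in fact reprove a special case of Fact~\ref{same} from first principles. And your handling of $w \in A_i^{jk\ell}$ is plausible. But the missing $B_1^{234}$ case is exactly where the paper's counting via Fact~\ref{B2}(3) is essential, and as written the proposal does not constitute a proof.
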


\begin{proof}
Obviously, $\sim$ is both reflective and symmetric. It then suffices to verify the transitivity. 

Assume that $v_5\sim v_7$ and $v_5\sim v_8$. We claim that $N(v_7, v_8) = \varnothing$. By Fact~\ref{K4}, there exists $v_6\in B_1^{234}$ such that $v_1, v_2, v_5, v_6$ form a $K_4$. Since $v_5\sim v_7$ and $N(v_5,v_7)=\varnothing$, Fact~\ref{same} implies $v_7\in A_5^{126}$. Similarly, $v_8\in A_5^{126}$. Note that Fact~\ref{56} implies $B_1^{256}\neq \varnothing$,  hence Fact~\ref{B2} shows $N(v_7,v_8)=\varnothing$. 
\end{proof}

Now we can prove that $4\mid n$, finishing the proof of Theorem~\ref{4}.

Let $q=|A_2^{134}|$. Then Fact~\ref{B2} implies $n=4q+2r_0$, so we only need to show that $2\mid r_0$.

By Fact~\ref{class}, the equivalence relation $\sim$ partitions $B_1^{234}$ into some equivalence classes. For any equivalence class $X$ in $B_1^{234}$ and $v_5\in X$, Fact~\ref{K4} tell us that we can find $v_{6}\in B_1^{234}$ such that $v_1, v_2, v_5, v_{6}$ form a $K_4$. Define $f(X)$ as the equivalence class contained $v_{6}$. By Fact~\ref{56}, $B_1^{256}\neq \varnothing$, hence Fact~\ref{same} implies that $X=A_5^{126}$ and $f(X)=A_6^{125}$. Hence,  for any $v_7\in X$ and $v_8\in B_1^{234}$, $v_1, v_2, v_7, v_8$ forming a $K_4$ is equivalent to $v_8\in f(X)$ by Fact~\ref{AAA}. Thus, the definition of $f$ does not rely on the choice of $v_5$ and $v_6$, and so $f$ is a well-defined function on equivalence classes.

The definition of $f$ implies $f(f(X))=X$ and $f(X)\neq X$ directly. Hence, $f$ induces a $2$-element partition of all equivalence classes. By Fact~\ref{kkk} and Fact~\ref{56}, $|X|=|A_5^{126}|=|A_6^{125}|=f(X)$, so each part of the partition contains an even number of elements, which shows that $2\mid r_0$.
\qed

\end{section}

\paragraph{Acknowledgements}
I am grateful to Oleg Pikhurko for proposing this problem to me, and for many helpful discussions and writing suggestions. I also want to thank Zichao Dong for helpful suggestions on the writing.

\bibliographystyle{plain}
\bibliography{name}

\end{document}